\documentclass{amsart}
\usepackage[utf8]{inputenc}
\usepackage{amsmath}
\usepackage{amsthm}
\usepackage{amsfonts, dsfont}
\usepackage{amssymb}
\usepackage[all]{xy}
\usepackage{indentfirst}
\usepackage[pagebackref=true]{hyperref}

\usepackage{cleveref}
\usepackage[alphabetic, initials]{amsrefs}

\usepackage{faktor}
\usepackage{xfrac}

\newtheorem{thm}{Theorem}[section]

\newtheorem{theorem}[thm]{Theorem}
\newtheorem{corollary}[thm]{Corollary}
\newtheorem{proposition}[thm]{Proposition}

\newtheorem*{theorem*}{Theorem}

\theoremstyle{definition}

\newtheorem*{defn*}{Definiton}
\newtheorem{example}[thm]{Example}

\newtheorem*{ack}{Acknowledgement}
\newtheorem{remark}[thm]{Remark}

\newcommand{\N}{\mathbb{N}} %% Naturals
\newcommand{\Z}{\mathbb{Z}} %% Integers
 %% Complex
 %% Complex
 %% Rationals
 %% Circle

\usepackage{verbatim} %% Enables 'comment'
\usepackage{color}
   %% Comments in red
\newcommand{\G}{\Gamma}

\newcommand{\La}{\Lambda}

\newcommand{\act}{\!\curvearrowright\!}

\newcommand{\supp}{\operatorname{supp}}

\newcommand{\Prob}{\operatorname{Prob}}

\newcommand{\cC}{\mathcal{C}}

\newcommand{\cL}{\mathcal{L}}

\newcommand{\cW}{\mathcal{W}}

\title{Relatively amenable actions of Thompson's groups}
\author{Eduardo Scarparo}

\address{Eduardo Scarparo\\ Federal University of Santa Catarina\\ Brazil}
\email{eduardo.scarparo@posgrad.ufsc.br}

\begin{document}

%%%%%%%%%%%%%%%%

\begin{abstract}
We investigate the notion of relatively amenable topological action and show that the action of Thompson's group $T$ on $S^1$ is relatively amenable with respect to Thompson's group $F$.  We use this to conclude that $F$ is exact if and only if $T$ is exact.  Moreover, we prove that the groupoid of germs of the action of $T$ on $S^1$ is Borel amenable. 
\end{abstract}
\maketitle
\section{Introduction}

In \cite{S07}, Spielberg showed that every \emph{Kirchberg} (i.e., simple, nuclear, purely infinite and seperable) \emph{algebra} which satisfies the UCT (universal coefficient theorem) admits a Hausdorff groupoid model, hence admits Cartan subalgebra.  Conversely, it was shown by Barlak and Li in \cite{BL17} that any separable and nuclear $C^*$-algebra which has a Cartan subalgebra satisfies the UCT.

Given an étale non-Hausdorff groupoid $G$,  there are dynamical criteria which ensure that the essential $C^*$-algebra of $G$ is a Kirchberg algebra.  Since, in general, $C^*_{\mathrm{ess}}(G)$ does not admit any obvious Cartan subalgebra, it seems natural to look at such groupoids as potential sources of counterexamples to the UCT problem (of whether every separable nuclear $C^*$-algebra satisfies the UCT).

Let $G(T,S^1)$ be the groupoid of germs of the action of Thompson's group $T$ on $S^1$. In \cite{KS20}, Kalantar and the author showed that the reduced $C^*$-algebra of $G(T,S^1)$ is not simple, even though $G(T,S^1)$ is minimal and effective. Moreover, as observed in \cite{KS20}, it follows from results of Kwaśniewski and Meyer \cite{KM21} that $C^*_{\mathrm{ess}}(G(T,S^1))$ is purely infinite and simple.  In this work, we show that $G(T,S^1)$ is Borel amenable. Since, as observed by Renault in \cite{Ren15}, the results on nuclearity of groupoid $C^*$-algebras from the work of Anantharaman-Delaroche and Renault \cite{zbMATH01544087} use only Borel amenability and hold in the non-Hausdorff setting as well, we conclude that $C^*_{\mathrm{ess}}(G(T,S^1))$ is a Kirchberg algebra. We leave open the question of whether $C^*_{\mathrm{ess}}(G(T,S^1))$ admits Cartan subalgebra (equivalently, whether it satisfies the UCT).

Let $\G$ be a group acting on a locally compact Hausdorff space $X$ and on a set $K$.  In \cite{zbMATH05053848}, Ozawa studied the existence of nets of continuous approximately equivariant maps $\mu_i\colon X\to\Prob(K)$. Clearly, the existence of such maps generalizes both topological amenability (in the case $K=\G)$ and set-theoretical amenability (in the case that $X$ consists of a single point).  If such a property holds in the case that $K$ is a set of left cosets $\G/\La$, we say that $X$ is \emph{$(\G,\La)$-amenable}. We show that this property generalizes the notion of \emph{relative co-amenability} introduced by Caprace and Monod in \cite{CM14} (in the more general setting of locally compact groups).

Consider Thompson's groups $F\leq T$. We show that $S^1$ is $(T,F)$-amenable and use this fact to conclude that $F$ is exact if and only if also $T$ is exact.

\begin{ack}
I am grateful to Nicolas Monod for comments on a preliminary version.
\end{ack}

\section{Relatively amenable actions}

Given a set $Y$, we consider $\Prob(Y):=\{\mu\in \ell^1(Y):\mu\geq 0, \|\mu\|_1=1\}$ equipped with the pointwise convergence topology.  

Given a group $\G$ acting by homeomorphisms on a locally compact Hausdorff space $X$, we say that $X$ is a \emph{locally compact $\G$-space}.  Given $\La\leq \G$, we say that $X$ is \emph{$(\G,\La)$-amenable} if there exists a net of continuous functions $\mu_i\colon X\to \Prob(\G/\La)$ which is \emph{approximately invariant} in the sense that 
\begin{equation*}
\lim_i\sup_{x\in K}\|s\mu_i(x)-\mu_i(sx)\|_1= 0
\end{equation*}
 for all $s\in\G$ and $K\subset X$ compact.  If $\La=\{e\}$, this is the usual notion of (\emph{topologically}) amenable action on a space $X$(\cite[Example 2.2.14.(2)]{zbMATH01544087}).  If $\La$ is co-amenable in $\G$, then any $\G$-space is $(\G,\La)$-amenable.

We will need the following result later on:
\begin{proposition}[{\cite[Proposition 5.2.1]{zbMATH05256855}}]\label{prop:BO}
Let $X$ be a compact $\G$-space which is $(\G,\La)$-amenable for some $\La\leq \G$.  If $\La$ is exact, then $\G$ is exact.
\end{proposition}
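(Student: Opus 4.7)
The plan is to invoke the characterization (due to Ozawa) that $\G$ is exact if and only if $\G$ admits an amenable action on some compact Hausdorff space. So the goal is to construct such a compact $\G$-space by combining the $(\G,\La)$-amenability of $X$ with an amenable $\La$-action on a compact space supplied by the exactness of $\La$.

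To this end, I would fix a compact $\La$-space $Y$ equipped with a net of continuous approximately $\La$-equivariant maps $\nu_j\colon Y\to \Prob(\La)$. Choose a section $s\colon \G/\La\to \G$ with $s(\La)=e$, and let $c(g,h\La):=s(gh\La)^{-1}\,g\,s(h\La)\in\La$ be the associated cocycle. The compact space $W:=Y^{\G/\La}$ (product topology) carries the coinduced $\G$-action
\[
(g\cdot f)(h\La):=c(g,g^{-1}h\La)\cdot f(g^{-1}h\La),
\]
and I set $Z:=X\times W$ with the diagonal action, which is a compact $\G$-space. Using the bijection $\G\leftrightarrow \G/\La\times\La$, $g\mapsto (g\La,\,s(g\La)^{-1}g)$, I then define
\[
\eta_{i,j}\colon Z\to \Prob(\G),\qquad \eta_{i,j}(x,f)(s(h\La)\lambda):=\mu_i(x)(h\La)\cdot \nu_j(f(h\La))(\lambda),
\]
which is continuous in $(x,f)$ by continuity of $\mu_i$, $\nu_j$, and the coordinate projections $f\mapsto f(h\La)$.

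A direct computation using the cocycle identity yields the pointwise bound
\[
\|r\cdot \eta_{i,j}(x,f)-\eta_{i,j}(r\cdot (x,f))\|_1 \leq \|r\mu_i(x)-\mu_i(rx)\|_1 + \sum_{h\La\in \G/\La}\mu_i(rx)(h\La)\,\|\lambda_{r,h}\nu_j(y_h)-\nu_j(\lambda_{r,h}y_h)\|_1,
\]
where $\lambda_{r,h}:=c(r,r^{-1}h\La)\in\La$ and $y_h:=f(r^{-1}h\La)\in Y$. The $(\G,\La)$-amenability of $X$ kills the first term uniformly on compact subsets of $X$.

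The main obstacle will be the second sum: the family of cocycle values $\{c(r,h\La):h\La\in \G/\La\}$ can be infinite, whereas the approximate $\La$-equivariance of the $\nu_j$ is only uniform over finite subsets of $\La$. To get around this, for each compact $K_X\subset X$ and each $\epsilon>0$ I would exploit the pointwise continuity of $\mu_i$ together with a finite covering of $K_X$ to produce a finite set $F\subset \G/\La$ with $\mu_i(x)(F)>1-\epsilon$ uniformly on $K_X$. The effective contribution to the second sum then comes from $\lambda_{r,h}$ lying in the finite set $c(r,r^{-1}F)\subset\La$, on which the approximate invariance of $\nu_j$ gives uniform control once $j$ is chosen sufficiently large (depending on $i$, $r$, and $\epsilon$). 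Diagonalizing over $(i,j)$ then yields a net of continuous approximately invariant maps $Z\to \Prob(\G)$, so $\G$ acts amenably on $Z$, and hence $\G$ is exact.
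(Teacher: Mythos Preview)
The paper does not supply its own proof of this proposition; it is quoted from \cite[Proposition~5.2.1]{zbMATH05256855} without argument. Your proposal is correct and reconstructs essentially the Brown--Ozawa proof: coinduce an amenable compact $\La$-space $Y$ to the compact $\G$-space $Y^{\G/\La}$, form the diagonal product with $X$, and splice $\mu_i$ and $\nu_j$ through the section/cocycle to obtain approximately invariant continuous maps into $\Prob(\G)$. The finite-support issue you anticipate is indeed the only subtlety, and your fix---using compactness of $X$ and $\ell^1$-continuity of $\mu_i$ to find a finite $F\subset\G/\La$ carrying almost all the mass of $\mu_i(x)$ uniformly in $x$---is the right one; the same reduction appears in the paper's proof of Proposition~\ref{ext}, there phrased as replacing $\mu_i$ by a nearby map with uniformly finite support via \cite[Lemma~4.3.8]{zbMATH05256855}.
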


Let us now characterize $(\G,\La)$-amenability in the case of a discrete $\G$-space.
\begin{proposition}\label{prop:rc}
Let $S$ be a discrete $\G$-space and $\La\leq\G$. Then $S$ is $(\G,\La)$-amenable if and only if there exists a unital positive $\G$-equivariant linear map $\varphi\colon \ell^\infty(\G/\La)\to\ell^\infty(S)$.
\end{proposition}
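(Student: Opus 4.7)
For the forward direction $(\Rightarrow)$, given an approximately invariant net $\mu_i \colon S \to \Prob(\G/\La)$, I define $\varphi_i \colon \ell^\infty(\G/\La) \to \ell^\infty(S)$ by $\varphi_i(f)(t) := \sum_{g\La \in \G/\La} f(g\La)\,\mu_i(t)(g\La)$. Each $\varphi_i$ is unital, positive, linear, and contractive, and the collection of all such maps is compact in the pointwise weak-$*$ topology by Banach--Alaoglu and Tychonoff. I pass to a cluster point $\varphi$; unitality, positivity, and linearity are closed conditions. Equivariance of $\varphi$ follows at each fixed $g \in \G$, $t \in S$, $f \in \ell^\infty(\G/\La)$ from the estimate $|\varphi_i(g \cdot f)(t) - (g \cdot \varphi_i(f))(t)| \le \|f\|_\infty \cdot \|g^{-1}\mu_i(t) - \mu_i(g^{-1}t)\|_1$, which tends to $0$ by approximate invariance applied to the finite (hence compact) singleton $\{t\}$.

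For the converse $(\Leftarrow)$, given $\varphi$, set $\Phi(t)(f) := \varphi(f)(t)$; then $\Phi \colon S \to \ell^\infty(\G/\La)^*$ is $\G$-equivariant (one checks $s \cdot \Phi(t) = \Phi(st)$ directly from the equivariance of $\varphi$) and takes values in the set $M(\G/\La)$ of means on $\G/\La$. Fix finite $F \subset \G$, $E \subset S$, and $\varepsilon > 0$, and put $E' := E \cup F E$. It suffices to produce $\mu \colon E' \to \Prob(\G/\La)$ with $\|s\mu(t) - \mu(st)\|_1 < \varepsilon$ for all $(s,t) \in F \times E$, extending $\mu$ to $S$ arbitrarily. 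Since $\Prob(\G/\La)$ is weak-$*$ dense in $M(\G/\La)$ and $E'$ is finite, I choose a joint net $(\mu^t_\alpha)_{t \in E'}$ in $\Prob(\G/\La)^{E'}$ with $\mu^t_\alpha \to \Phi(t)$ weak-$*$ for every $t \in E'$. Equivariance then yields $s\mu^t_\alpha - \mu^{st}_\alpha \to s\Phi(t) - \Phi(st) = 0$ in the weak-$*$ topology for each $(s,t) \in F \times E$.

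These differences lie in $\ell^1(\G/\La)$, and on $\ell^1(\G/\La) \hookrightarrow \ell^\infty(\G/\La)^*$ the subspace weak-$*$ topology coincides with the weak topology of $\ell^1(\G/\La)$, since both are generated by $\ell^\infty(\G/\La)$. Hence the defect vectors converge weakly to $0$ in $\bigoplus_{(s,t) \in F \times E} \ell^1(\G/\La)$. The set $\{(s\nu^t - \nu^{st})_{(s,t)} : (\nu^t) \in \Prob(\G/\La)^{E'}\}$ is the affine image of a convex set and is therefore convex, so Mazur's theorem supplies a point of norm less than $\varepsilon$, producing the desired $\mu$. The main obstacle is this final step, where one identifies weak-$*$ convergence of means with weak convergence of $\ell^1$-differences and invokes Mazur to upgrade weak approximation of $0$ to norm approximation; the forward direction is a routine Tychonoff compactness argument.
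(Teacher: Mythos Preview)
Your proof is correct and follows essentially the same route as the paper: both directions hinge on the identification of unital positive maps $\ell^\infty(\G/\La)\to\ell^\infty(S)$ with equivariant families of means $S\to\ell^\infty(\G/\La)^*$, a weak-$*$ compactness/cluster-point argument for $(\Rightarrow)$, and the Mazur/Hahn--Banach passage from weak to norm approximation of $0$ by the convex set of defect vectors for $(\Leftarrow)$. Your version is slightly more explicit (the estimate for equivariance in $(\Rightarrow)$, the restriction to the finite set $E'=E\cup FE$ in $(\Leftarrow)$), but there is no substantive difference.
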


\begin{proof}
We identify the space of bounded linear maps $\cL(\ell^\infty(\G/\La),\ell^\infty(S))$ with $\ell^\infty(S,\ell^\infty(\G/\La)^*)$. Under this identification, a unital positive $\G$-equivariant map $\varphi\in\cL(\ell^\infty(\G/\La),\ell^\infty(S))$ corresponds to a map $\psi\colon S\to\ell^\infty(\G/\La)^*$ such that, for every $s\in S$ and $g\in\G$,  it holds that $\psi(s)$ is a state and $\psi(gs)=g(\psi(s))$.

Suppose $S$ is $(\G,\La)$-amenable and let $\mu_i\colon S\to\Prob(\G/\La)\subset\ell^\infty(\G/\La)^*$ be a net of approximately invariant functions.  By taking a subnet, we may assume that, for each $s\in S$, $\mu_i(s)$ converges in the weak-$*$ topology to a state $\psi(s)\in\ell^\infty(\G/\La)^*$. Clearly,  $\psi\colon S\to\ell^\infty(\G/\La)^*$ has the desired properties.

Conversely, suppose that there exists $\psi\in\ell^\infty(S,\ell^\infty(\G/\La)^*)$ unital, positive and $\G$-equivariant.  Since $\ell^1(\G)$ is weak-$*$ dense in $\ell^\infty(\G)^*$,  we can find a net $\mu_i\colon S\to\Prob(\G/\La)\subset\ell^\infty(\G/\La)^*$ such that, for each $s\in S$, $\mu_i(s)\to\psi(s)$ in the weak-$*$ topology.  By $\G$-equivariance of $\psi$, we have that, for each $g\in\G$ and $s\in S$, the net $g\mu_i(s)-\mu_i(gs)$ converges to zero weakly in $\ell^1(\G/\La)$. 

Given $\epsilon>0$ and finite subsets $E\subset \G$ and $F\subset S$, we claim that there is $\mu\colon S\to\Prob(\G/\La)$ such that $\|g\mu(x)-\mu(gx)\|_1<\epsilon$ for each $x\in F$ and $g\in E$.  From the previous paragraph, we obtain that $0$ is in the weak closure of the convex set 
$$\bigoplus_{\substack{g\in E\\s\in F}}\{g\mu(s)-\mu(gs)\mid\mu\colon S\to\Prob(\G/\La)\}\subset\bigoplus_{\substack{g\in E\\s\in F}}\ell^1(\G/\La)$$.

By the Hahn-Banach separation theorem, the claim follows. Thus, $S$ is $(\G,\La)$-amenable.
\end{proof}

\begin{remark}
Given a group $\G$ and subgroups $\La_1,\La_2\leq\G$, Proposition \ref{prop:rc} implies that $\G/\La_2$ is $(\G,\La_1)$-amenable if and only if $\La_1$ is co-amenable to $\La_2$ relative to $\G$ in the sense of \cite[Section 7.C]{CM14}.

\end{remark}

For completeness, we record the following permanence property. The proof follows the argument in \cite[Proposition 5.2.1]{zbMATH05256855}.

\begin{proposition}\label{ext}
Let $X$ be a locally compact $\G$-space and $\La_1\leq\La_2\leq\G$ such that $X$ is $(\G,\La_2)$-amenable and $(\La_2,\La_1)$-amenable. Then $X$ is $(\G,\La_1)$-amenable.
\end{proposition}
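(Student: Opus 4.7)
The plan is to combine the two given nets via a set-theoretic section of the quotient map $\G\to\G/\La_2$. Let $(\mu_i\colon X\to\Prob(\G/\La_2))_i$ witness $(\G,\La_2)$-amenability and $(\nu_j\colon X\to\Prob(\La_2/\La_1))_j$ witness $(\La_2,\La_1)$-amenability, and fix a section $\sigma\colon\G/\La_2\to\G$. The bijection $(g\La_2,h\La_1)\mapsto\sigma(g\La_2)h\La_1$ identifies $\G/\La_2\times\La_2/\La_1$ with $\G/\La_1$; write $\sigma(g\La_2)_*\nu\in\Prob(\G/\La_1)$ for the pushforward of $\nu\in\Prob(\La_2/\La_1)$ under $h\La_1\mapsto\sigma(g\La_2)h\La_1$, and note that distinct cosets $g\La_2$ yield pushforwards with disjoint supports in $\G/\La_1$. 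Define
\[
\phi_{i,j}(x):=\sum_{g\La_2\in\G/\La_2}\mu_i(x)(g\La_2)\,\sigma(g\La_2)_*\nu_j(\sigma(g\La_2)^{-1}x)\in\Prob(\G/\La_1).
\]
Continuity of $\phi_{i,j}$ is immediate, because each coordinate of $\phi_{i,j}(x)$ is the product of a single coordinate of $\mu_i(x)$ with a single coordinate of $\nu_j(\sigma(g\La_2)^{-1}x)$.

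The approximate-invariance computation rests on the identity $s\sigma(g\La_2)=\sigma(sg\La_2)\,t(s,g)$, where $t(s,g):=\sigma(sg\La_2)^{-1}s\sigma(g\La_2)\in\La_2$. Reindexing by $g'\La_2:=sg\La_2$ and applying the elementary bound $\|\alpha p-\beta q\|_1\le\alpha\|p-q\|_1+|\alpha-\beta|$ for probability measures $p,q$ and scalars $\alpha,\beta\ge 0$, together with the disjointness of supports noted above, yields
\[
\|\phi_{i,j}(sx)-s\phi_{i,j}(x)\|_1\le\|\mu_i(sx)-s\mu_i(x)\|_1+\sum_{g'\La_2}\mu_i(sx)(g'\La_2)\,\bigl\|\nu_j(u_{g'}y_{g',x})-u_{g'}\nu_j(y_{g',x})\bigr\|_1,
\]
where $u_{g'}:=t(s,s^{-1}g')\in\La_2$ and $y_{g',x}:=\sigma(s^{-1}g'\La_2)^{-1}x\in X$. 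The first summand tends to zero uniformly on any compact $K\subset X$ by $(\G,\La_2)$-amenability of $X$.

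The main obstacle is the second summand, since the set $\{u_{g'}:g'\in\G/\La_2\}\subset\La_2$ is typically infinite while $(\La_2,\La_1)$-amenability only provides uniform control on finite subsets of $\La_2$ and compact subsets of $X$. To resolve this, fix $\epsilon>0$ and compact $K\subset X$, and first choose $i$ so large that $\sup_{x\in K}\|s\mu_i(x)-\mu_i(sx)\|_1<\epsilon$. Since each $\mu_i(y)\in\ell^1(\G/\La_2)$ is essentially finitely supported, a standard compactness argument (using continuity of the finite partial sums $y\mapsto\sum_{g\La_2\in F}\mu_i(y)(g\La_2)$ on the compact set $sK$) produces a finite $F\subset\G/\La_2$ such that $\mu_i(sx)(F)>1-\epsilon$ for every $x\in K$. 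Then $\{u_{g'}:g'\in F\}\subset\La_2$ is finite and $K':=\{y_{g',x}:g'\in F,\,x\in K\}\subset X$ is compact, so choosing $j$ large enough that $\sup_{y\in K'}\|\nu_j(uy)-u\nu_j(y)\|_1<\epsilon$ for every $u$ in this finite set makes the contribution over $F$ at most $\epsilon$, while the tail over $g'\notin F$ is bounded by $2\epsilon$. Letting $\epsilon\to 0$ along the product net shows that $(\phi_{i,j})$ is approximately invariant, proving $(\G,\La_1)$-amenability.
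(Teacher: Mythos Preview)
Your argument is correct and follows essentially the same route as the paper's proof: both use a section $\sigma\colon\G/\La_2\to\G$, the cocycle identity $s\sigma(a)=\sigma(sa)\bigl(\sigma(sa)^{-1}s\sigma(a)\bigr)$ with $\sigma(sa)^{-1}s\sigma(a)\in\La_2$, and the same combined measure $x\mapsto\sum_a\eta^x(a)\,\sigma(a)\nu^{\sigma(a)^{-1}x}$. The only technical difference is that the paper invokes \cite[Lemma~4.3.8]{zbMATH05256855} to replace $\eta$ by a map with \emph{uniformly} finite support in $\G/\La_2$ before doing the estimate, whereas you fix $i$ first and then use compactness of $sK$ and continuity of $\mu_i$ to find a finite $F$ carrying mass $>1-\epsilon$, paying a $2\epsilon$ tail. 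Both devices serve the same purpose: reducing the set of cocycle values $\sigma(sa)^{-1}s\sigma(a)$ to a finite subset of $\La_2$ so that $(\La_2,\La_1)$-amenability applies.

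One small wording issue: ``letting $\epsilon\to 0$ along the product net'' is not quite right, because your choice of $j$ depends on $i$ (through $F$), so the full product net $(\phi_{i,j})_{(i,j)}$ need not be approximately invariant. What your argument actually shows is that for every finite $E\subset\G$, compact $K\subset X$ and $\epsilon>0$ there exist $i,j$ with $\sup_{x\in K}\|s\phi_{i,j}(x)-\phi_{i,j}(sx)\|_1<4\epsilon$ for all $s\in E$; indexing by triples $(E,K,\epsilon)$ then gives the desired net. This is exactly how the paper organizes the proof.
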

\begin{proof}
Fix $E\subset \G$ finite, $\epsilon>0$ and $K\subset X$ compact.  Take $\eta\colon X\to\Prob(\G/\La_2)$ continuous such that $\sup_{x\in K}\|s\eta^x-\eta^{sx}\|<\epsilon/2$ for all $s\in E$. By arguing as in \cite[Lemma 4.3.8]{zbMATH05256855}, we may assume that there is $F\subset\G/\La_2$ finite such that $\supp\eta^x\subset F$ for all $x\in X$.

Fix a cross-section $\sigma\colon\G/\La_2\to\G$. Let $$E^*:=\{\sigma(sa)^{-1}s\sigma(a
):a\in F, s\in E\}\subset\La_2.$$

and $$L:=\bigcup_{a\in F}\sigma(a)^{-1}K.$$

 Take $\nu\colon X\to\Prob(\La_2/\La_1)\subset\Prob(\G/\La_1)$ continuous such that $$\max_{s\in E^*}\sup_{y\in L}\|s\nu(y)-\nu(sy)\|_1<\epsilon/2.$$

Let 
\begin{align*}
\mu\colon X&\to \Prob(\G/\La_1)\\
x&\mapsto \sum_{a\in F}\eta^x(a)\sigma(a)\nu^{\sigma(a)^{-1}x}.
\end{align*}

Given $s\in E$ and $x\in K$, we have 
\begin{align*}
s\mu(x)&=\sum_{a\in F}\eta^x(a)s\sigma(a)\nu^{\sigma(a)^{-1}x}\\
&=\sum_{a\in \G/\La_2}\eta^x(a)\sigma(sa)\sigma(sa)^{-1}s\sigma(a)\nu^{\sigma(a)^{-1}x}\\
&\approx_{\epsilon/2} \sum_{a\in \G/\La_2}\eta^x(a)\sigma(sa)\nu^{\sigma(sa)^{-1}sx}\\
&\approx_{\epsilon/2} \sum_{a\in \G/\La_2}\eta^{sx}(sa)\sigma(sa)\nu^{\sigma(sa)^{-1}sx}\\
&=\sum_{b\in \G/\La_2}\eta^{sx}(b)\sigma(b)\nu^{\sigma(b)^{-1}sx}=\mu(sx).
\end{align*}
\end{proof}

\subsection*{Thompson's groups} Thompson's group $V$ consists of piecewise linear, right continuous bijections on $[0,1)$ which have finitely many points of non-differentiability, all being dyadic rationals, and have a derivative which is an integer power of $2$ at each point of differentiability.  

Let $\cW$ be the set of finite words in the alphabet $\{0,1\}$. Given $w\in \cW$ with length $|w|$, let $\cC(w):=\{(x_n)\in \{0,1\}^\N:x_{[1,|w|]}=w\}$.  Also let $\psi\colon\cW\to[0,1]$ be the map given by $\psi(w):=\sum_{n=1}^{|w|} x_n2^{-n}$, for $w\in \cW$.  By identifying a set of the form $\cC(w)$ with the half-open interval $[\psi(w),\psi(w)+2^{-|w|})$, we can view $V$ as the group of homeomorphisms of $\{0,1\}^\N$ consisting of elements $g$ for which there exist two partitions $\{\cC(w_1),\dots\cC(w_n)\}$ and $\{\cC_{z_1},\dots,\cC_{z_n}\}$ of $\{0,1\}^\N$ such that $g(w_ix)=z_ix$ for every $i$ and infnite binary sequence $x$. 

Let $D:=\{(x_n)\in \{0,1\}^\N:\text{$\exists k\in\N$ such that $x_l=0\ \ \forall l\geq k$}\}$. Notice that $D$ is $V$-invariant. Given $w\in \cW$, let $w0^\infty$ be the element of $D$ obtained by extending $w$ with infinitely many $0$'s.

\begin{theorem}\label{thompson}
There is a sequence of continuous maps $\mu_N\colon \{0,1\}^\N\to\Prob(D)$ such that 
\begin{equation}\label{limi}
\lim_N\sup_{x\in \{0,1\}^\N}\|s\mu_N(x)-\mu_N(sx)\|_1=0
\end{equation}
for every $s\in V$.
\end{theorem}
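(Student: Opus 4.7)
The natural attempt is to define a Følner-type sequence of continuous maps by averaging over initial segments:
\[
\mu_N(x) := \frac{1}{N}\sum_{k=1}^N \delta_{x[1,k]\,0^\infty}.
\]
Each $\mu_N(x)$ is a probability measure supported in $D$, and $\mu_N$ depends only on $x[1,N]$, hence is locally constant and continuous. It remains to verify the approximate equivariance \eqref{limi}.

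Fix $s\in V$ and choose, per the characterization in the excerpt, two partitions $\{\cC(w_1),\ldots,\cC(w_n)\}$ and $\{\cC(z_1),\ldots,\cC(z_n)\}$ of $\{0,1\}^\N$ with $s(w_i y) = z_i y$ for every $i$ and every $y\in\{0,1\}^\N$. Let $M := \max_i \max(|w_i|,|z_i|)$. The key observation is that for any $x\in\cC(w_i)$ and any $k\geq |w_i|$, the truncation $x[1,k]\,0^\infty$ still lies in $\cC(w_i)$, so
\[
s\bigl(x[1,k]\,0^\infty\bigr) \;=\; z_i\,\tilde x[1,k-|w_i|]\,0^\infty \;=\; (sx)[1,\,k+\delta_i]\,0^\infty,
\]
where $\tilde x$ is the tail of $x$ after $w_i$ and $\delta_i := |z_i|-|w_i|$. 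Thus, up to the first $|w_i|-1\leq M-1$ terms, the measure $s\mu_N(x)$ is exactly the sum $\tfrac{1}{N}\sum_{j=|z_i|}^{N+\delta_i}\delta_{(sx)[1,j]\,0^\infty}$, which differs from $\mu_N(sx)=\tfrac{1}{N}\sum_{j=1}^N \delta_{(sx)[1,j]\,0^\infty}$ only in a window of at most $|z_i|+|\delta_i|\leq 2M$ indices at the two ends. Each index contributes mass $1/N$, and all Dirac masses involved are at distinct points of $D$ (since the prefixes $x[1,k]$ have distinct lengths), so the total $\ell^1$-mismatch is bounded by
\[
\|s\mu_N(x) - \mu_N(sx)\|_1 \;\leq\; \frac{C\,M}{N},
\]
for an absolute constant $C$, uniformly in $x\in\{0,1\}^\N$. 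Letting $N\to\infty$ with $s$ fixed gives \eqref{limi}.

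\textbf{Main obstacle.} The only nontrivial point is the identity $s(x[1,k]\,0^\infty) = (sx)[1,k+\delta_i]\,0^\infty$ for $k\geq|w_i|$: one must check that truncating $x$ after position $k$ and padding with zeros leaves the result in the same cylinder $\cC(w_i)$, so that the piecewise rule defining $s$ applies identically before and after truncation. Once this is in place, the $\ell^1$ estimate is a bookkeeping exercise on the indices $k$ versus $k+\delta_i$. Note that no separate argument is needed for $x$ on the (finite, in fact measure-zero but here just small) boundary between cylinders, since the partition $\{\cC(w_i)\}$ covers $\{0,1\}^\N$ exactly.
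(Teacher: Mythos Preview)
Your proposal is correct and is essentially the same argument as the paper's: define $\mu_N(x)=\frac{1}{N}\sum_{k=1}^N\delta_{x[1,k]0^\infty}$, use the piecewise prefix-substitution description of $s\in V$ to get $s(x[1,k]0^\infty)=(sx)[1,k+\delta_i]0^\infty$ for $k\ge|w_i|$, and conclude by counting the $O(M)$ mismatched indices. One small slip: the parenthetical ``all Dirac masses involved are at distinct points of $D$ (since the prefixes $x[1,k]$ have distinct lengths)'' is false---the \emph{words} $x[1,k]$ are distinct, but the points $x[1,k]0^\infty\in D$ need not be (take $x=0^\infty$)---yet this is harmless, since the upper bound $\|\cdot\|_1\le CM/N$ follows already from the triangle inequality and does not require distinctness.
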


\begin{proof}

Given $N\in \N$, let $\mu_N\colon \{0,1\}^\N\to\Prob(D)$ be defined by $$\mu_N(x):=\frac{1}{N}\sum_{j=1}^N\delta_{x_{[1,j]}0^\infty}.$$  Clearly,  for each $d\in D$ and $N\in\N$, the map $x\mapsto \mu_N(x)(d)$ is continuous. We claim that $(\mu_N)$ satisfies $\eqref{limi}$. 

Fix $s\in V$. There exist two partitions $\{\cC(w_1),\dots\cC(w_n)\}$ and $\{\cC_{z_1},\dots,\cC_{z_n}\}$ of $\{0,1\}^\N$ such that $s(w_ix)=z_ix$ for every $i$ and infnite binary sequence $x$. 

Let $k(s):=\max_i\{|w_i|,|z_i|-|w_i|\}$.  Fix $1\leq i \leq n$ and $x\in \cC(w_i)$.  Let $\alpha_i:=|z_i|-|w_i|$. Given $k> k(s)$, we have 
$$s(x_{[1,k]}0^\infty)
=z_ix_{[|w_i|+1,k]}0^\infty
=s(x)_{[1,|z_i|]}s(x)_{[|z_i|+1,k+|z_i|-|w_i|]}0^\infty
=s(x)_{[1,k+\alpha_i]}0^\infty.$$  
For $N>2k(s)$, we have 
\begin{align*}
\|s\mu_N(x)-\mu_N(sx)\|&=\frac{1}{N}\left\|\sum_{j=1}^N\delta_{s(x_{[1,j]}0^\infty)}-\delta_{s(x)_{[1,j]}0^\infty}\right\|\\
&\leq \frac{1}{N}\left\|\sum_{j=k(s)+1}^{N-k(s)}\delta_{s(x_{[1,j]}0^\infty)}-\sum_{l=k(s)+1+\alpha_i}^{N-k(s)+\alpha_i}\delta_{s(x)_{[1,l]}0^\infty}\right\| + \frac{4k(s)}{N}  \\
&=\frac{4k(s)}{N}.
\end{align*}

\end{proof}

Thompson's group $T$ is the subgroup of $V$ consisting of elements which have at most one point of discontinuity.  By identifying $[0,1)$ with $S^1$, the elements of $T$ can be seen as homeomorphisms on $S^1$. Thompson's group $F$ is the subgroup of $T$ which stabilizes $1\in S^1$. 

\begin{corollary}\label{cor:amena}
The spaces $\{0,1\}^\N$ and $S^1$ are $(T,F)$-amenable.
\end{corollary}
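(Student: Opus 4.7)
The plan is to deduce $(T,F)$-amenability of $\{0,1\}^\N$ directly from Theorem~\ref{thompson} via an identification $D\cong T/F$, and to handle $S^1$ by a parallel direct construction using continuous piecewise-linear interpolation between dyadic rationals.

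For $\{0,1\}^\N$, the key point is that $T$ acts transitively on $D$: for each $w\in\cW$, the dyadic rotation $y\mapsto y+\psi(w)\bmod 1$ belongs to $T$ and sends $0^\infty$ to $w0^\infty$. Moreover the $T$-stabilizer of $0^\infty$ coincides with $F$, since under the identification $[0,1)\cong S^1$ the point $0^\infty$ corresponds to $0=1\in S^1$, which $F$ stabilizes by definition. Hence $D\cong T/F$ as $T$-sets, so the maps $\mu_N$ of Theorem~\ref{thompson} can be viewed as continuous maps $\{0,1\}^\N\to\Prob(T/F)$; since $T\leq V$, their approximate $V$-invariance specializes to approximate $T$-invariance, witnessing $(T,F)$-amenability.

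For $S^1$ there is no continuous section of the projection $\pi\colon\{0,1\}^\N\to S^1$, so one cannot simply compose $\mu_N$ with a section. Instead I would construct continuous approximately invariant maps directly on $S^1$. Identifying $T/F$ with the set of dyadic rationals in $S^1$ via $gF\mapsto g(1)$ and writing $a_j(y):=\lfloor 2^jy\rfloor/2^j$, set
\[
\mu_j(y):=(1-\{2^jy\})\,\delta_{a_j(y)}+\{2^jy\}\,\delta_{a_j(y)+2^{-j}},
\]
where the addition in the second subscript is taken in $S^1$. The weight collapses onto a single Dirac mass whenever $y$ is itself dyadic at scale $2^{-j}$, which makes $\mu_j$ continuous on $S^1$ (including at the seam $0=1$). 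Put $\nu_N:=\frac{1}{N}\sum_{j=1}^N\mu_j$, which is continuous as a sum of continuous maps.

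The main technical step is verifying approximate $T$-invariance. Fix $g\in T$ and let $K$ bound both the scale of its dyadic breakpoints and the absolute values of its slope exponents $k_i$. For $y$ in a linearity region of $g$ of slope $2^k$ and for $j\geq K+\max_i|k_i|$, a direct computation yields $g\mu_j(y)=\mu_{j-k}(gy)$ on the nose, even when the interpolation interval $[a_j(y),a_j(y)+2^{-j}]$ abuts a breakpoint of $g$; the critical fact that makes this work at region-boundaries is that $g$ is continuous as a self-homeomorphism of $S^1$, so its one-sided values at every breakpoint agree. Summing over $j\leq N$, the shift by $k$ telescopes into an error of magnitude at most $2|k|/N$, and the bounded number of small exceptional $j$ contribute at most $O_g(1/N)$. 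Thus $\sup_{y\in S^1}\|g\nu_N(y)-\nu_N(gy)\|_1\to 0$ as $N\to\infty$, proving $(T,F)$-amenability of $S^1$. The main obstacle here is precisely the boundary-of-region verification, which is cleanly resolved by working intrinsically on $S^1$ rather than in the $[0,1)$-presentation.
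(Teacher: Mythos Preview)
Your argument for $\{0,1\}^\N$ is exactly the paper's: identify $D$ with $T/F$ via transitivity and the fact that $F=\Stab_T(0^\infty)$, then quote Theorem~\ref{thompson}.

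For $S^1$ you take a genuinely different route. The paper does not build continuous maps on $S^1$ directly; instead it uses the $T$-equivariant \emph{Borel} map $\varphi\colon S^1\to\{0,1\}^\N$ given by binary expansion, pulls back the $\mu_N$ of Theorem~\ref{thompson} to obtain Borel approximately invariant maps $S^1\to\Prob(T/F)$, and then invokes \cite[Proposition~5.2.1]{zbMATH05256855} (or \cite[Proposition~11]{zbMATH05053848}) to pass from Borel to continuous approximately invariant maps. Your piecewise-linear interpolation $\mu_j(y)=(1-\{2^jy\})\delta_{a_j(y)}+\{2^jy\}\delta_{a_j(y)+2^{-j}}$ and the averaged $\nu_N$ give an explicit, self-contained construction that avoids that citation; the key identity $g\mu_j(y)=\mu_{j-k}(gy)$ on a slope-$2^k$ region does hold for all sufficiently large $j$ (you need $j$ large enough that both the domain breakpoints and the image dyadics $g(p)$ are at scale coarser than $2^{-(j-|k|)}$, so that $2^{j-k}g(a_j(y))\in\Z$; you might make this threshold explicit). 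The trade-off is clear: the paper's argument is shorter and structural, reducing $S^1$ to $\{0,1\}^\N$ via an equivariant Borel map plus a standard upgrading lemma, while yours is more hands-on and yields an explicit continuous witnessing sequence without appealing to external machinery.
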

\begin{proof}
Notice that $T$ acts transitively on $D\subset \{0,1\}^\N$. Since $F$ is the stabilizer of $0^\infty\in D$, it follows immediately from Theorem \ref{thompson} that $\{0,1\}^\N$ is $(T,F)$-amenable.

Let $\varphi\colon S^1\to \{0,1\}^\N$ be the map which, given $\theta\in[0,1)$,  sends $e^{2\pi i \theta}$ to the binary expansion of $\theta$. Clearly, $\varphi$ is $T$-equivariant and Borel measurable.  Since $\{0,1\}^\N$ is $(T,F)$-amenable, composition with $\varphi$ gives rise to a sequence $u_n\colon S^1\to\Prob(T/F)$ of approximately $T$-equivariant pointwise Borel maps (in the sense that for each $d\in T/F$ the map $x\mapsto u_n(x)(d)$ is Borel).  It follows from \cite[Proposition 5.2.1]{zbMATH05256855} (or \cite[Proposition 11]{zbMATH05053848}) that $S^1$ is $(T,F)$-amenable.
\end{proof}

The next result follows immediately from Proposition \ref{prop:BO} and Corollary \ref{cor:amena}.

\begin{corollary}
Thompson's group $F$ is exact if and only if Thompson's group $T$ is exact.
\end{corollary}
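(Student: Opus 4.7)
The plan is to prove the two implications separately, and both reduce to invoking already-established results.

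For the forward direction ($F$ exact implies $T$ exact), I would observe that $S^1$ is a compact $T$-space, and Corollary \ref{cor:amena} gives that $S^1$ is $(T,F)$-amenable. Proposition \ref{prop:BO} then applies directly: its hypotheses are a compact $\G$-space that is $(\G,\La)$-amenable for some exact $\La$, with $\G = T$ and $\La = F$ here. The conclusion is that $T$ is exact.

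For the reverse direction ($T$ exact implies $F$ exact), I would appeal to the well-known permanence property that exactness passes to subgroups. Since $F \leq T$, this is immediate. (If one wanted a self-contained argument, one could also note that $F$ acts on the same compact space $S^1$ and is $(F,F)$-amenable trivially, but the subgroup permanence property is more direct and classical.)

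The main obstacle is essentially nonexistent, since the hard work has already been done: the construction of the approximately invariant maps $\mu_N$ in Theorem \ref{thompson} is what makes this corollary follow in two lines. The only care needed is to confirm that $S^1$ (and not merely $\{0,1\}^\N$) is the object to which Proposition \ref{prop:BO} applies, which is justified by the second half of Corollary \ref{cor:amena}, where the Borel $T$-equivariant pushforward from $S^1$ to $\{0,1\}^\N$ is promoted to genuine $(T,F)$-amenability.
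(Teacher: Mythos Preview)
Your proposal is correct and matches the paper's own argument, which simply states that the result follows immediately from Proposition~\ref{prop:BO} and Corollary~\ref{cor:amena}. You have merely made explicit the (standard) reverse implication via the subgroup permanence of exactness, which the paper leaves implicit.
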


The next result has been recorded in \cite[Section 3.2]{Mon06} as a consequence of hyperfiniteness of the equivalence relation of $T$ on $S^1$. It also follows from the fact that stabilizers of amenable actions are amenable, Proposition \ref{ext} and Corollary \ref{cor:amena}.

\begin{corollary}[\cite{Mon06}]
The following conditions are equivalent:
\begin{enumerate}
\item[(i)] $F$ is amenable;
\item[(ii)] $T\act \{0,1\}^\N$ is amenable;
\item[(iii)] $T\act S^1$ is amenable.

\end{enumerate}
\end{corollary}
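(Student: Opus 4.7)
My plan is to prove the three-way equivalence by showing (ii) $\Rightarrow$ (i), (iii) $\Rightarrow$ (i), and then (i) $\Rightarrow$ (ii) and (i) $\Rightarrow$ (iii), using the three tools indicated by the author: stabilizers of amenable actions are amenable, Proposition \ref{ext}, and Corollary \ref{cor:amena}.

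For the easy implications (ii) $\Rightarrow$ (i) and (iii) $\Rightarrow$ (i), I would identify $F$ as a point stabilizer in each action. Under the identification of $[0,1)$ with $S^1$ and of $D$ with the dyadic rationals, the point $0^\infty \in \{0,1\}^\N$ corresponds to $1 \in S^1$, and by definition $F$ is precisely $\Stab_T(1)$ (equivalently $\Stab_T(0^\infty)$). Invoking the standard fact that stabilizers of topologically amenable actions of discrete groups are amenable (see, e.g., \cite[Proposition 5.2.1]{zbMATH05256855} or the results of Anantharaman-Delaroche--Renault in \cite{zbMATH01544087}), either (ii) or (iii) forces $F$ to be amenable.

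For (i) $\Rightarrow$ (ii) and (i) $\Rightarrow$ (iii), the idea is to combine $(T,F)$-amenability of the space with amenability of $F$ via the permanence result. Assume $F$ is amenable and let $X$ denote either $\{0,1\}^\N$ or $S^1$. Because $F$ is amenable as an abstract group, every $F$-space is amenable; in particular $X$ is $(F,\{e\})$-amenable. By Corollary \ref{cor:amena}, $X$ is also $(T,F)$-amenable. Applying Proposition \ref{ext} with $\G=T$, $\La_2=F$, $\La_1=\{e\}$ then yields that $X$ is $(T,\{e\})$-amenable, which is exactly topological amenability of the action $T\act X$.

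I do not expect any serious obstacle: the hint bundles all the nontrivial work into the already-proven Proposition \ref{ext} and Corollary \ref{cor:amena}, together with the standard stabilizer result. The only thing to take care of is the bookkeeping identification of $F$ with the correct stabilizer (in particular noting that the point $1\in S^1 \simeq [0,1)$ under the identification in the paper corresponds to $0^\infty$ rather than to $1^\infty$, since $T$ stabilizes the endpoint $1=0$ in $S^1$ coming from the left limit at $1\in[0,1)$).
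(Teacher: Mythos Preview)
Your proposal is correct and follows exactly the route indicated in the paper: use that stabilizers of topologically amenable actions are amenable for (ii)$\Rightarrow$(i) and (iii)$\Rightarrow$(i), and combine Corollary~\ref{cor:amena} with Proposition~\ref{ext} (taking $\La_2=F$, $\La_1=\{e\}$) for the converse implications. The only minor quibble is that your citation \cite[Proposition 5.2.1]{zbMATH05256855} is the exactness statement (Proposition~\ref{prop:BO} here), not the stabilizer-amenability fact; the latter is elsewhere in the Brown--Ozawa or Anantharaman-Delaroche--Renault references.
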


\section{Groupoids of germs}
We say that a topological groupoid $G$ is \emph{étale} if its unit space $G^{(0)}$ is Hausdorff and the range and source maps $r,s\colon G\to G^{(0)}$ are local homeomorphisms. If $G$ is also second countable,  then $G$ is said to be \emph{Borel amenable} (\cite[Definition 2.1]{Ren15}) if there exists a sequence $(m_n)_{n\in\N}$, where each $m_n$ is a family $(m_n^x)_{x\in G^{(0)}}$ of probability measures on $r^{-1}(x)$ such that:
\begin{enumerate}
\item[(i)] For all $n\in\N$,  $m_n$ is Borel in the sense that for all bounded Borel functions $f$ on $G$, $x\mapsto\sum_{g\in r^{-1}(x)}f(g)m_n^x(g)$ is Borel;
\item[(ii)] For all $g\in G$,  we have $\sum_{h\in r^{-1}(r(g))}|m_n^{s(g)}(g^{-1}h)-m_n^{r(g)}(h)|\to 0$.
\end{enumerate}

\begin{remark}\label{subgpd}
Let $G$ be a second countable étale groupoid and $A\subset G^{(0)}$ a measurable subset which is invariant in the sense that $r^{-1}(A)=s^{-1}(A)$. In this case, $G_A:=s^{-1}(A)$ is a subgroupoid of $G$.  If $G$ is Borel amenable, then clearly $G_A$ is also Borel amenable.  Conversely,  if $G_A$ and $G_{G^{(0)}\setminus A}$ are Borel amenable, then, since $G=G_A\sqcup G_{G^{(0)}\setminus A}$, also $G$ is Borel amenable.
\end{remark}

Let $\G$ be a group acting on a compact Hausdorff space $X$. Given $x\in X$, let $\G_x^{0}:=\{g\in\G:\text{$g$ fixes pointwise a neighborhood of $x$}\}$ be the \emph{open stabilizer} at $x$. Consider the following equivalence relation on $\G\times X$: $(g,x)\sim(h,y)$ if and only if $x=y$ and $g\G_x^{(0)}=h\G_x^{(0)}$. As a set, the \emph{groupoid of germs} of $\G\act X$ is $G(\G,X):=\frac{\G\times X}{\sim}$. The topology on $G(\G,X)$ is the one generated by sets of the form $[g,U]:=\{[g,x]:x\in U\}$, for $U\subset X$ open and $g\in \G$.  Inversion in $G(\G,X)$ is given by $[g,x]^{-1}=[g^{-1},gx]$. Two elements $[h,y],[g,x]\in G(\G,X)$ are multipliable if and only if $y=gx$, in which case $[h,y][g,x]:=[hg,x]$. With this structure, $G(\G,X)$ is an étale groupoid.

\begin{example}\label{ex:tg}
Let $G_{[2]}$ be the Cuntz groupoid introduced in \cite[Definition III.2.1]{Ren80}. Since Thompson's group $T$ can be seen as a covering subgroup of the topological full group of $G_{[2]}$ (\cite[Example 3.3]{BS19}), it follows from \cite[Proposition 4.10]{NO19} that $G(T,\{0,1\}^\N)\simeq G_{[2]}$. Hence, $G(T,\{0,1\}^\N)$ is Borel amenable by \cite[Proposition III.2.5]{Ren80}.
\end{example}
\begin{theorem}
The groupoid of germs of $T\act S^1$ is Borel amenable.
\end{theorem}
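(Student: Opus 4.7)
The plan is to partition the unit space $S^1$ into two $T$-invariant Borel pieces and invoke Remark \ref{subgpd}. Let $A:=T\cdot 1\subset S^1$ denote the countable orbit of $1$, i.e., the image of the dyadic rationals in $[0,1)$ under $\theta\mapsto e^{2\pi i\theta}$, and let $B:=S^1\setminus A$. By Remark \ref{subgpd}, it suffices to show that $G(T,S^1)|_A$ and $G(T,S^1)|_B$ are both Borel amenable.

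For the non-dyadic part, I consider the continuous, $T$-equivariant surjection $\pi\colon\{0,1\}^\N\to S^1$ defined by $\pi(x)=\exp\bigl(2\pi i\sum_n x_n 2^{-n}\bigr)$. This map is injective except on the countable $T$-invariant set $E\subset\{0,1\}^\N$ of eventually constant sequences, where it is two-to-one, and it restricts to a $T$-equivariant homeomorphism $\{0,1\}^\N\setminus E\to B$. This induces an isomorphism of groupoids of germs $G(T,S^1)|_B\cong G(T,\{0,1\}^\N)|_{\{0,1\}^\N\setminus E}$. Since $G(T,\{0,1\}^\N)$ is Borel amenable (Example \ref{ex:tg}), a further use of Remark \ref{subgpd} yields Borel amenability of $G(T,S^1)|_B$.

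For the dyadic part, the key observation is that the isotropy group of $G(T,S^1)$ at $1$, namely $F/F^0$ with $F^0:=T_1^0$, is amenable: the pair of one-sided slopes at $1\in S^1$ defines a surjective homomorphism $F\to\Z\times\Z$ with kernel exactly $F^0$. In particular, $F^0$ is co-amenable in $F$. Combining Corollary \ref{cor:amena} with Proposition \ref{ext} applied to the chain $F^0\leq F\leq T$, I obtain $(T,F^0)$-amenability of $S^1$, and hence continuous approximately invariant maps $u_n\colon S^1\to\Prob(T/F^0)$. Fixing a Borel section $\sigma\colon A\to T$ of the orbit map $g\mapsto g\cdot 1$, and using $T_y^0=\sigma(y)F^0\sigma(y)^{-1}$, I would transport each $u_n(y)$ to a Borel family of probability measures $m_n^y$ on the range fibers $r^{-1}(y)\cong T/T_y^0$.

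The main technical step will be verifying that $(m_n^y)_{y\in A}$ satisfies the approximate invariance condition along groupoid arrows. A direct computation reduces this to showing that $u_n(y)$ is asymptotically invariant under right translation by the $F$-valued cocycle $\sigma(sy)^{-1}s\sigma(y)$; since the maps produced in the proof of Proposition \ref{ext} are built by convolving with a F{\o}lner-type probability measure on $F/F^0\cong\Z\times\Z$, this additional invariance holds in the limit. Remark \ref{subgpd} then combines the two pieces to give Borel amenability of $G(T,S^1)$.
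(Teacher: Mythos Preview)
Your partition $S^1=A\sqcup B$ into the dyadic orbit and its complement, together with the treatment of the non-dyadic piece $B$ via the $T$-equivariant homeomorphism with the complement of the eventually-constant sequences in $\{0,1\}^\N$, matches the paper's argument exactly.

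For the dyadic orbit $A$, the paper takes a much shorter route than you do. Since $A$ is countable, $G(T,S^1)|_A$ is a countable discrete groupoid; it is transitive (because $T$ acts transitively on $A$) with isotropy group $F/T_1^0=F/[F,F]\cong\Z^2$ at the point $1$. Any transitive discrete groupoid with amenable isotropy is Borel isomorphic to the elementary groupoid $A\times A\times\Z^2$, whose Borel amenability is immediate. No appeal to Corollary~\ref{cor:amena} or Proposition~\ref{ext} is needed, and no cocycle enters.

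Your detour through $(T,F^0)$-amenability can in principle be completed, but the step you flag as ``the main technical step'' is not yet justified. Two facts are hidden in the sentence ``this additional invariance holds in the limit'': right translation of $T/F^0$ by $c\in F$ is well-defined only because $F^0=[F,F]$ is normal in $F$; and the F{\o}lner measures on $F/F^0$ built into Proposition~\ref{ext} are only approximately \emph{left}-invariant, so one must invoke that $F/F^0\cong\Z^2$ is abelian to get right invariance. Even granting these, the construction in Proposition~\ref{ext} controls $\nu$ only against the finitely many cocycle values coming from the finite support of $\eta$, whereas your cocycle $\sigma(sy)^{-1}s\sigma(y)$ ranges over infinitely many elements of $F$ as $y$ varies in $A$. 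Since Borel amenability requires only pointwise convergence along arrows and $G(T,S^1)|_A$ is countable, a diagonal choice over an enumeration of its arrows would close the gap, but this needs to be said explicitly. The paper's structural observation avoids all of this.
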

\begin{proof}
Let $X:=\{e^{2\pi i\theta}:\theta\in\Z[1/2]\}$ and $Y:=S^1\setminus X$.  Notice that $X$ is $T$-invariant. We will show that $G(T,S^1)_X$ and $G(T,S^1)_Y$ are Borel amenable. From Remark \ref{subgpd} it will follow that $G(T,S^1)$ is Borel amenable.

Let $\varphi\colon S^1\to \{0,1\}^\N$ be the $T$-equivariant map which, given $\theta\in[0,1)$,  sends $e^{2\pi i \theta}$ to the binary expansion of $\theta$. Notice that $\varphi|_Y\colon Y\to \varphi(Y)$ is a homeomorphism. Furthermore, the map 
\begin{align*}
\tilde{\varphi}\colon G(T,S^1)_Y&\to G(T,\{0,1\}^\N)_{\varphi(Y)}\\
[g,y]&\mapsto[g,\varphi(y)]
\end{align*}
 is an isomorphism of topological groupoids. Therefore, $G(T,S^1)_Y$ is Borel amenable by Remark \ref{subgpd} and Example \ref{ex:tg}.

Notice that $G(T,S^1)_X$ is a countable set. Moreover, it follows from \cite[Theorem 4.1]{CFP96} that the open stabilizer $T_1^0$ is equal to the commutator subgroup $[F,F]$ and $\frac{F}{[F,F]}\simeq\Z^2$.  Therefore, $G(T,S^1)_X$ is Borel isomorphic to the transitive discrete groupoid $X\times X\times\Z^2$, which, due to the amenability of the isotropy group, is Borel amenable.

\end{proof}

\bibliographystyle{acm}
\bibliography{bibliografia}
\end{document}